\documentclass[11pt]{amsproc}

\setlength{\textwidth}{15.5cm} \setlength{\textheight}{22cm}
\setlength{\oddsidemargin}{0.5cm} \setlength{\topmargin}{0cm}
\setlength{\evensidemargin}{0.5cm} \setlength{\topmargin}{0cm}
\setlength{\parindent}{0pt}

\usepackage{amsmath, amssymb, amsthm, latexsym,nicefrac,setspace}
\usepackage[pagebackref,colorlinks,linkcolor=red,citecolor=blue,urlcolor=blue,hypertexnames=true]{hyperref}
\usepackage[backrefs]{amsrefs}

\theoremstyle{plain}
\newtheorem{theorem}{Theorem}[section]
\newtheorem{question}[theorem]{Question}
\newtheorem{lemma}[theorem]{Lemma}
\newtheorem{corollary}[theorem]{Corollary}
\newtheorem{proposition}[theorem]{Proposition}

\theoremstyle{remark}

\newtheorem*{theorem*}{Theorem}

\input{xy}
\xyoption{all}

\def\R{\mathbb{R}}

\def\N{\mathbb{N}}

\title[Length of shortest non-trivial elements]{On the length of the shortest non-trivial element \\ in the derived and the lower central series}

\author{Abdelrhman Elkasapy}
\address{Abdelrhman Elkasapy, MPI-MIS, Inselstra\ss e 22,
04103 Leipzig, Germany, and Mathematics Department, South Valley University, Qena, Egypt}
\email{elkasapy@mis.mpg.de}

\author{Andreas Thom}
\address{Andreas Thom, Univ.\ Leipzig,
PF 100920, 04009 Leipzig , Germany}
\email{andreas.thom@math.uni-leipzig.de}

\begin{document}

\onehalfspace

\begin{abstract}
We provide upper and lower bounds on the length of the shortest non-trivial element in the derived series and lower central series in the free group on two generators. The techniques are used to provide new estimates on the nilpotent residual finiteness growth and on almost laws for compact groups.
\end{abstract}

\maketitle

\tableofcontents

\section{Introduction}
It is a well-known and remarkable theorem of Friedrich Levi \cite{levi1,levi2} that any nested series of subgroups which are characteristic in each other in a free group either stabilizes or has trivial intersection. This is non-trivial to prove directly even for the derived series (see Section \ref{girth} for definitions). Using his non-commutative differential calculus, Ralph Fox \cite{fox} has extended this result to the lower central series and given a conceptual explanation  -- for the lower central series he proved that the length of the shortest non-trivial element in the $n$-th step of this series has length at least $n/2$. It is an interesting question to determine the precise asymptotics of this quantity.
Equivalently, one could ask for some information on the smallest integer $m$, such that every element of length $n$ in the free group survives in some quotient which is $m$-step solvable resp.\ $m$-step nilpotent. Hence, we are trying to make the fact that the free group is residually solvable and residually nilpotent quantitative. Similar questions have been asked in the context of residual finiteness, see \cite{MR2851069, MR2583614, MR2784792, MR2970452} for some recent work on this problem.

In this note we want to provide upper bounds for the growth rate of the length of the shortest non-trivial element in the derived series and the lower central series. An upper bound of $n^2$ was proved by Malestein-Putman \cite{MR2737679} and conjectured to be asymptotically sharp. We disprove this conjecture with a concrete construction.

\vspace{0.2cm}

For a group $\Gamma$ and $a,b \in \Gamma$, we write $[a,b]=aba^{-1}b^{-1}$. We note the basic identities $[a,b]^{-1} = [b,a]$, $[a,a]=[a,a^{-1}] = [a,e]=e$, for all $a,b \in \Gamma$. If $\Lambda_1,\Lambda_2 \subset \Gamma$ are subgroups, we write $[\Lambda_1,\Lambda_2]$ for the subgroup generated by $\{[\lambda_1,\lambda_2] \mid \lambda_1 \in \Lambda_1, \lambda_2 \in \Lambda_2 \}$.

\vspace{0.2cm}

For functions $f,g \colon \N \to \R$, we write
$f(n)=O(g(n))$ if $$\limsup_{n \to \infty} \frac{|f(n)|}{|g(n)|}< \infty.$$ We write $f(n) = o(g(n))$ if 
$$\lim_{n \to \infty} \frac{|f(n)|}{|g(n)|}=0$$
and $f(n) \preceq g(n)$ if there is a constant $C$, such that
$f(n) \leq C g(Cn)$ for all $n \in \N$.

\section{Growth of girth in the lower central and derived series}
\label{girth}
Let ${\mathbb F}_2$ be the free group on two generators $a$ and $b$. We denote the word length function with respect to the generating set $\{a,a^{-1},b,b^{-1}\}$ by $\ell \colon {\mathbb F}_2 \to \N$. Recall that the lower central series is a nested family of normal subgroups of a group $\Gamma$ which is defined recursively by
$$\gamma_1({\Gamma}) := \Gamma \quad \mbox{and} \quad \gamma_{n+1}(\Gamma):= [\gamma_{n}(\Gamma),\Gamma] \quad n \geq 1.$$ We also consider the derived series, which is defined by the recursion 
$$\Gamma^{(0)}:= \Gamma \quad \mbox{and} \quad \Gamma^{(n+1)} := [\Gamma^{(n)}, \Gamma^{(n)}], \quad n \geq 0.$$ It is a well-known fact that $[\gamma_{n}(\Gamma),\gamma_{m}(\Gamma)] \subset \gamma_{n+m}(\Gamma)$, and hence induction can be used to show the inclusions
\begin{equation} \label{relation}
\gamma_n(\gamma_m(\Gamma)) \subset \gamma_{nm}(\Gamma) \quad \mbox{and} \quad \Gamma^{(n)} \subset \gamma_{2^{n}}(\Gamma), \quad \forall n,m \in \N.\end{equation}
Moreover, it is clear from the definition that 
\begin{equation} \label{clear}
(\Gamma^{(n)})^{(m)} = \Gamma^{(n+m)}.
\end{equation}
In this section we want to study the growth of the functions
$$\alpha(n):= \min \{ \ell(w) \mid w \in \gamma_n({\mathbb F}_2) \setminus \{e\} \} \quad\mbox{and} \quad \beta(n):= \min \{ \ell(w) \mid w \in {\mathbb F}_2^{(n)} \setminus \{e\} \}.$$ It is clear from \eqref{relation} that
\begin{equation} \label{rel}\alpha(2^{n}) \leq \beta(n).\end{equation}

We can think of $\alpha(n)$ resp.\ $\beta(n)$ as the girth the Cayley graph of the group ${\mathbb F}_2/\gamma_n({\mathbb F}_2)$ resp.\ ${\mathbb F}_2/\Gamma^{(n)}$ with respect to the image of the natural generating set of ${\mathbb F}_2$. It is clear that $\alpha(1)=\beta(0)=1$ and that $\alpha$ and $\beta$ are monotone increasing. 

Fox \cite[Lemma 4.2]{fox} showed $\alpha(n) \geq n/2$ and this was improved by Malestein-Putman to $\alpha(n) \geq n$ \cite[Theorem 1.2]{MR2737679}.
Since $[\gamma_n({\mathbb F}_2),\gamma_m({\mathbb F}_2))] \subset \gamma_{n+m}({\mathbb F}_2)$, we get
$$\alpha(n+m) \leq 2 \left(\alpha(n) + \alpha(m) \right).$$
Since in particular $\alpha(2n) \leq 4 \alpha(n)$, this suggests an asymptotic behaviour of the form $\alpha(n)  =O(n^2)$ for some constant $C>0$ and infinitely many $n \in \N$. This indeed was shown by Malestein-Putman \cite{MR2737679} (on an infinite subset of $\N$) and conjectured to be sharp. However, already the simple computation
\begin{equation} \label{basic}
\ell([[a,b],[b,a^{-1}]]) = \ell(aba^{-1}b^{-1}  ba^{-1}b^{-1}a   bab^{-1}a^{-1} a^{-1} bab^{-1}) \leq 8 \ell(a) + 6 \ell(b)
\end{equation}
and the observation $[[\gamma_n({\mathbb F}_2),\gamma_n({\mathbb F}_2)],[\gamma_n({\mathbb F}_2),\gamma_n({\mathbb F}_2)]] \subset \gamma_{4n}(\Gamma)$ suggests that it is enough to multiply the length by $14$ in order to increase the depth in the central series by a factor of $4$. So, this then suggests
$\alpha(n) =O(n^{\mu})$ for $\mu = \log_4(14) < 2.$ In what follows we want to make these considerations precise and try to minimize $\mu$. It remains to be an open question if $\mu = 1 + \varepsilon$ for all $\varepsilon>0$ is possible to achieve.

\begin{lemma} We have
$$\inf \left\{ \frac{\log_2(\alpha(n))}{\log_2(n)} \mid n \in \mathbb N \right\} =\lim_{n \to \infty} \frac{\log_2(\alpha(n))}{\log_2(n)}$$
and $$\inf \left\{\frac{\log_2(\beta(n))}{n} \mid n \in \mathbb N \right\}=\lim_{n \to \infty} \frac{\log_2(\beta(n))}{n}.$$
\end{lemma}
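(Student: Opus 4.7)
The plan is to establish the submultiplicativity estimates
\[
\alpha(nm) \le \alpha(n)\alpha(m) \quad\text{and}\quad \beta(n+m) \le \beta(n)\beta(m),
\]
from which both assertions follow by Fekete-type arguments.

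For the inequality on $\alpha$, I fix $w \in \gamma_n(\mathbb F_2)\setminus\{e\}$ with $\ell(w)=\alpha(n)$. Since $\gamma_n(\mathbb F_2)$ is normal, every conjugate of $w$ lies in $\gamma_n(\mathbb F_2)$, so minimality of $\ell(w)$ forces $w$ to be cyclically reduced. Consequently, all cyclic rotations of $w$ are reduced words of length $\alpha(n)$ contained in $\gamma_n(\mathbb F_2)$. I claim that two such rotations $w_1,w_2$ generate a free subgroup of rank two. Otherwise all rotations pairwise commute, hence lie in a common cyclic subgroup $\langle r\rangle$ with $r$ primitive in $\mathbb F_2$, and each rotation equals $r^{\pm K}$ for the fixed integer $K=\alpha(n)/\ell(r)$ --- giving at most two distinct rotations. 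On the other hand, the number of distinct cyclic rotations of $w$ equals $\ell(u)$, where $u$ is the primitive root of $w$ (so $w=u^k$); and since $w\in[\mathbb F_2,\mathbb F_2]$ has zero exponent sum on both generators, so does $u$, placing $u$ in $[\mathbb F_2,\mathbb F_2]\setminus\{e\}$ and forcing $\ell(u)\ge 4$. This contradiction produces the desired pair $w_1,w_2$.

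Given such $w_1,w_2\in\gamma_n(\mathbb F_2)$ of length $\alpha(n)$ generating a free group of rank two, the substitution $\phi\colon\mathbb F_2\to\mathbb F_2$, $\phi(a)=w_1,\ \phi(b)=w_2$, is injective. For $u\in\gamma_m(\mathbb F_2)$ realizing $\alpha(m)$, the element $\phi(u)$ lies in $\gamma_m(\langle w_1,w_2\rangle)\subset\gamma_m(\gamma_n(\mathbb F_2))\subset\gamma_{nm}(\mathbb F_2)$ by \eqref{relation}, is non-trivial by injectivity, and has length at most $\alpha(m)\cdot\alpha(n)$. The argument for $\beta$ is identical, producing $w_1,w_2\in\mathbb F_2^{(n)}$ of length $\beta(n)$ generating a free group of rank two by the same cyclic-rotation reasoning, and then using \eqref{clear} to deduce $\phi(\mathbb F_2^{(m)})\subset(\mathbb F_2^{(n)})^{(m)}=\mathbb F_2^{(n+m)}$.

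With the submultiplicativities in hand, $\log_2\beta(n)$ is subadditive in $n$, so the classical Fekete lemma immediately yields $\log_2\beta(n)/n\to\inf_n\log_2\beta(n)/n$. Similarly, $\log_2\alpha$ is subadditive on the multiplicative monoid $(\mathbb N,\cdot)$, and combined with monotonicity of $\alpha$ this gives the analogous multiplicative conclusion: setting $M=\inf_{n\ge 2}\log_2\alpha(n)/\log_2 n$ and, for $\varepsilon>0$, choosing $k_0\ge 2$ with $\log_2\alpha(k_0)/\log_2 k_0<M+\varepsilon$, for any $N$ with $k_0^j\le N<k_0^{j+1}$ monotonicity and iterated submultiplicativity give $\log_2\alpha(N)\le(j+1)\log_2\alpha(k_0)$, so $\log_2\alpha(N)/\log_2 N\le(1+1/j)(M+\varepsilon)\to M+\varepsilon$ as $N\to\infty$. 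The principal obstacle is the rank-two-freeness step for the pair of cyclic rotations; once that is in place, everything else reduces to a standard Fekete computation.
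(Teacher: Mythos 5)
Your proposal is correct and follows exactly the route the paper takes: establish $\alpha(nm)\le\alpha(n)\alpha(m)$ and $\beta(n+m)\le\beta(n)\beta(m)$ by substituting two free-rank-two cyclic rotations of a shortest word, then invoke (a multiplicative variant of) Fekete's lemma. The only difference is that you flesh out the step the paper dismisses as ``easy to see'' — namely that two cyclic rotations of the shortest cyclically reduced word in $\gamma_n(\mathbb F_2)$ (for $n\ge 2$) generate a rank-two free group — via the centralizer argument plus the count of distinct rotations against $\ell(u)\ge 4$ for the primitive root $u\in[\mathbb F_2,\mathbb F_2]$; this is a genuine and correct completion of the paper's gap, though it implicitly uses $n\ge 2$ (the $n=1$, resp.\ $n=0$, cases being trivially submultiplicative).
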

\begin{proof}
From the first inclusion in \eqref{relation}, we see that $\alpha(nm) \leq \alpha(n)\alpha(m).$ Indeed, let $w \in {\mathbb F}_2$ be the shortest non-trivial word in $\gamma_m(\mathbb F_2)$. Then, it is easy to see that $w$ and some cyclic rotation $w'$ of $w$ are free and of length $\alpha(m)$. Applying the shortest non-trivial word in $\gamma_n(\mathbb F_2)$ to $w$ and $w'$ yields some non-trivial element in $\gamma_{nm}(\mathbb F_2)$ of length less than or equal $\alpha(n)\alpha(m)$. Now, the first part of the lemma is implied by Fekete's Lemma. The second part follows in a similar way from Equation \eqref{clear}.
\end{proof}
In view of the preceding lemma, we set
$$\alpha := \lim_{n \to \infty} \frac{\log_2(\alpha(n))}{\log_2(n)} \quad\mbox{and} \quad \beta:= \lim_{n \to \infty} \frac{\log_2(\beta(n))}{n}.$$
By Fox' result \cite[Lemma 4.2]{fox} and inequality \eqref{rel}, we get
$1 \leq \alpha \leq \beta.$ Our main result is the following:
\begin{theorem}
\label{main}
Let $\mathbb F_2$ be the free group on two generators and $(\alpha(n))_{n \in \N}, (\beta(n))_{n \in \N}, \alpha$ and $\beta$ be defined as above.
\begin{enumerate}
\item We have
$$\alpha \leq \frac{\log_2(3+\sqrt{17}) - 1}{\log_2(1 + \sqrt{2})}= 1,4411...\ $$ or equivalently
$\alpha(n) \preceq n^{\frac{\log_2(3+\sqrt{17}) - 1}{\log_2(1 + \sqrt{2})} + \varepsilon}$ for all $\varepsilon>0$.
\item We have $$\log_2(3) \leq \beta \leq \log_2\left(3 + \sqrt{17} \right) -1= 1.8325...\ $$
or equivalently
$\log_2(3) \cdot n \leq \log_2(\beta(n)) \leq (\log_2\left(3 + \sqrt{17} \right) -1) \cdot n + o(n).$
\end{enumerate}
\end{theorem}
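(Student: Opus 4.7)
My plan is to prove both upper bounds by iterating a single endomorphism $\Phi\colon \mathbb F_2\to \mathbb F_2$ whose image lies in the commutator subgroup, so that $a_n:=\Phi^n(a)$ and $b_n:=\Phi^n(b)$ automatically lie in $\mathbb F_2^{(n)}$. A natural candidate is $\Phi(a):=[a,b]$ and $\Phi(b):=[a,[a,b]]$, giving the recursion
\[ a_{n+1}=[a_n,b_n],\qquad b_{n+1}=[a_n,a_{n+1}]. \]
The strategy is to (i) track the lower-central depths of $a_n,b_n$, (ii) track their lengths, and (iii) verify non-triviality.

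For the depths, let $d_n,e_n$ be the largest integers with $a_n\in\gamma_{d_n}$ and $b_n\in\gamma_{e_n}$. The inclusion $[\gamma_p,\gamma_q]\subset\gamma_{p+q}$ yields $d_{n+1}=d_n+e_n$ and $e_{n+1}=d_n+d_{n+1}=2d_n+e_n$. The companion matrix $\bigl(\begin{smallmatrix}1&1\\2&1\end{smallmatrix}\bigr)$ has trace $2$, determinant $-1$ and dominant eigenvalue $1+\sqrt 2$, so $a_n\in\gamma_{k_n}(\mathbb F_2)$ with $k_n$ of order $(1+\sqrt 2)^n$---substantially deeper than the obvious bound $\gamma_{2^n}$.

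The delicate step is the length bound $\ell(a_n)\le C((3+\sqrt{17})/2)^n$. The straightforward estimate $\ell([x,y])\le 2\ell(x)+2\ell(y)$ gives a length matrix $\bigl(\begin{smallmatrix}2&2\\6&4\end{smallmatrix}\bigr)$ with dominant eigenvalue $3+\sqrt{13}\approx 6.6$, which is too large. The improvement to $\lambda:=(3+\sqrt{17})/2$ relies on the cancellations inside the iterated commutators---of the same flavour as the saving from length $16$ to length $14$ in equation \eqref{basic}. The key is to show by induction that $(a_n,b_n)$ maintain a rigid leading/trailing syllable structure, ensuring a fixed number of letter cancellations at every commutator junction; this sharpens the effective length matrix to one of trace $3$, determinant $-2$ and dominant eigenvalue $\lambda$.

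With these ingredients the bounds follow. Non-triviality $a_n\ne e$ is verified inductively, for instance by tracking the leading-order term of the Magnus expansion of $a_n$ in the free Lie algebra, where $\Phi$ induces an injective operator on each graded piece. Then $\beta(n)\le \ell(a_n)\le C\lambda^n$ yields $\beta\le \log_2\lambda=\log_2(3+\sqrt{17})-1$, proving (2); combining with the depth bound, $\alpha(k_n)\le C\lambda^n$ with $k_n$ of order $(1+\sqrt 2)^n$ gives $\alpha\le \log_2\lambda/\log_2(1+\sqrt 2)$, proving (1). The lower bound $\beta\ge \log_2 3$ follows separately, from a counting argument based on an iterated Magnus (Fox) embedding of the free $n$-step solvable quotient $\mathbb F_2/\mathbb F_2^{(n)}$. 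The hardest step is the cancellation analysis: maintaining the rigid syllable structure through all iterations so as to achieve the dominant eigenvalue $(3+\sqrt{17})/2$ rather than the loose $3+\sqrt{13}$.
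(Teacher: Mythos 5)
Your overall strategy---iterate an endomorphism $\Phi$ with image in the commutator subgroup, track lower-central depth and word length, and extract eigenvalues from the resulting recursions---is indeed the skeleton of the paper's proof. But the critical steps are asserted rather than proven, and your specific choice of $\Phi$ is likely to break the argument.

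The paper uses $\Phi(a)=[b^{-1},a]$, $\Phi(b)=[a,b]$, i.e.\ $a_{n+1}=[b_n^{-1},a_n]$, $b_{n+1}=[a_n,b_n]$. This choice is not incidental: it makes $a_{n+1}$ and $b_{n+1}$ conjugate via $b_n$ (since $b_n a_{n+1} b_n^{-1} = b_{n+1}$), so that $\ell(a_n)=\ell(b_n)$ and $\gamma(a_n)=\gamma(b_n)$. That single fact collapses both the length and depth recursions to scalar two-step recursions, which is exactly what makes the careful no-cancellation induction (Lemma~\ref{nocancel}) and the bound $\ell(b_n)\le 3\ell(b_{n-1})+2\ell(b_{n-2})$ manageable. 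Your $\Phi(a)=[a,b]$, $\Phi(b)=[a,[a,b]]$ destroys this: $a_n$ and $b_n$ are not conjugate, their lengths diverge (already $\ell(a_1)=4$ but $\ell(b_1)=10$), and the cancellation at the commutator junctions is irregular --- a direct computation gives $\ell(a_2)=16$, $\ell(b_2)=38$, with massive cancellation (12 letters) at one junction and almost none (2 letters) at another. There is no visible ``rigid leading/trailing syllable structure'' that would deliver a length matrix with trace $3$ and determinant $-2$; you would have to prove that such structure exists, and the numerical evidence does not support it.

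You explicitly flag the cancellation analysis as ``the hardest step'' but do not carry it out; that analysis \emph{is} the proof of the upper bound, so this is a genuine gap rather than a detail. Similarly, non-triviality of $a_n$ is waved at via ``the leading-order term of the Magnus expansion,'' but the paper proves $\ell(b_n)\ge 2^n$ directly from the no-cancellation lemma, which is both simpler and necessary (nontriviality is used in the definition of $\alpha(n)$ and $\beta(n)$). For the depth recursion you also need the identity that lets the depth grow faster than the obvious $d_{n+1}\ge d_n+e_n$; the paper achieves this via the nontrivial rewriting $b_n=[[a_{n-1},b_{n-2}],b_{n-1}]$ coming from the commutator identity \eqref{eqrel}, together with the conjugacy fact $\gamma(a_{n-1})=\gamma(b_{n-1})$ --- another place where your asymmetric $\Phi$ loses traction. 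Finally, the lower bound $\beta\ge\log_2 3$ is not a Magnus-embedding counting argument in the paper at all: it is a combinatorial argument with Nielsen-reduced bases (Theorem~\ref{lowerbound}), showing directly that ${\rm girth}([\Lambda,\Lambda])\ge 3\,{\rm girth}(\Lambda)$ for any normal subgroup $\Lambda$; you would need to supply either that argument or a genuinely different one, and you supply neither.
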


It is currently unclear to us how one could improve the upper bounds. Unfortunately, it seems even more unclear how to provide lower bounds for $\alpha$. The proof of the upper bounds follows from an explicit construction of short elements in the next section. The lower bound for $\beta$ is a consequence of Theorem \ref{lowerbound}, see Corollary \ref{corol}.

\section{The construction} 
\label{construction}
Recall that we consider ${\mathbb F}_2$ to be generated by letters $a$ and $b$.
We set $a_0:=a, b_0:=b$ and define recursively
$$a_{n+1} := [b_n^{-1},a_n], \quad b_{n+1}:=[a_n,b_n], \quad \forall n \in \N.$$ 

\begin{lemma} \label{nocancel}
For all $n \in \N$, the products $a_na_n, b_nb_n, a_n^{-1}b_n, b_n^{-1}a_n, a_nb_n^{-1}, b_na_n^{-1},a_n^{-1}b_n^{-1}$, and $ b_na_n$ involve no cancellation.
\end{lemma}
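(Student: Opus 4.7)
The plan is to induct on $n$. The base case $n = 0$ is trivial since $a_0 = a$ and $b_0 = b$ are distinct single letters. For the inductive step I first reformulate the eight non-cancellation statements, after identifying each product with its inverse, as five inequalities on the first and last letters of $a_n$ and $b_n$. Writing $F_n, L_n$ for the first and last letter of the reduced word $a_n$, and $F'_n, L'_n$ for those of $b_n$, these inequalities are
\[
F_n \neq (L'_n)^{-1}, \quad L_n \neq F_n^{-1}, \quad L'_n \neq (F'_n)^{-1}, \quad F_n \neq F'_n, \quad L_n \neq L'_n,
\]
all of which hold at $n = 0$ by inspection.

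Next I expand $a_{n+1} = b_n^{-1} a_n b_n a_n^{-1}$ and $b_{n+1} = a_n b_n a_n^{-1} b_n^{-1}$ and examine the three internal junctions of each. Four of these six junctions are exactly of the form of a product appearing in the inductive list (for instance $b_n^{-1} \mid a_n$ inside $a_{n+1}$ is controlled by the non-cancellation of $b_n^{-1} a_n$), so by the induction hypothesis they involve no cancellation. The remaining two junctions are both copies of the middle junction $a_n \mid b_n$, whose product $a_n b_n$ is the one product \emph{not} present in the list, and indeed explicit calculation already at $n = 1$ shows that this junction genuinely does cancel. Assuming temporarily that such cancellation never propagates to the boundary of $a_{n+1}$ or $b_{n+1}$, the first and last letters of $a_{n+1}, b_{n+1}$ follow the naive recursion
\[
F_{n+1} = (L'_n)^{-1}, \quad L_{n+1} = F_n^{-1}, \quad F'_{n+1} = F_n, \quad L'_{n+1} = (F'_n)^{-1},
\]
and a direct substitution shows that each of the five inequalities at level $n+1$ collapses one-for-one to an inequality already assumed at level $n$.

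The main obstacle is therefore verifying the non-propagation claim. I would address it by strengthening the inductive hypothesis to carry additional structural information, for example an upper bound on the length of the longest common prefix of $a_n^{-1}$ and $b_n$ showing it to be strictly less than both $|a_n|$ and $|b_n|$, together with a guaranteed mismatch between the letters of $a_n^{-1}$ and $b_n$ immediately following that prefix. Propagating such a finer invariant from level $n$ to level $n+1$ is a direct if slightly tedious letter-level analysis, relying on the already-controlled outer junctions of $a_{n+1}$ and $b_{n+1}$ together with the recursion for $F_n, L_n, F'_n, L'_n$ above. Once this is in place, the prescribed first/last-letter recursion is valid, the five inequalities persist, and the induction closes, yielding the lemma.
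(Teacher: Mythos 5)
Your proposal follows essentially the same inductive scheme as the paper: expand $a_{n+1}$ and $b_{n+1}$ via the defining recursion and observe that the relevant junctions are controlled by the level-$n$ products in the lemma's list. Your reformulation in terms of first and last letters $F_n, L_n, F'_n, L'_n$ is just a notational repackaging of what the paper does case by case (e.g.\ the paper's ``$a_n^{-1}b_n = [a_{n-1},b_{n-1}^{-1}][a_{n-1},b_{n-1}]$, and the claim follows since $b_{n-1}a_{n-1}$ involves no cancellation'' is exactly your inequality $L'_{n-1} \neq F_{n-1}^{-1}$).

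Where your write-up genuinely adds something is in flagging the implicit step: the product $a_{n-1}b_{n-1}$ is absent from the list and \emph{does} cancel, and this cancellation sits inside both $a_n$ and $b_n$, so one must check it never reaches the boundary before the ``naive'' first/last-letter recursion can be invoked. The paper's proof is silent on this. However, your proposal only identifies the gap without closing it; you defer the key step to ``a direct if slightly tedious letter-level analysis'' of an invariant (a bound on the common prefix of $a_n^{-1}$ and $b_n$) that you never actually propagate. As written, this is not a complete proof.

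The gap is real but can be closed much more cheaply than your proposed strengthening suggests. Carry $\ell(a_n) = \ell(b_n)$ along as part of the induction: this is immediate once the non-cancellation statements at level $n-1$ are known, since $a_n$ and $b_n$ then have the same cancellation amount $k$ at their unique bad junction $a_{n-1}\mid b_{n-1}$, giving $\ell(a_n)=\ell(b_n)=2\ell(a_{n-1})+2\ell(b_{n-1})-2k$. With equal lengths in hand, the cancellation at $a_{n-1}\mid b_{n-1}$ consuming all of $a_{n-1}$ (equivalently all of $b_{n-1}$) would force $a_{n-1}=b_{n-1}^{-1}$, hence $b_n=[a_{n-1},b_{n-1}]=e$, which is absurd in a free group. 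So $k<\ell(a_{n-1})=\ell(b_{n-1})$, the cancellation is strictly interior, and the first/last-letter recursion you wrote down is valid — at which point your five inequalities do close the induction.
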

\begin{proof}
We prove the claim by induction, where the case $n=0$ is obvious. We check
$a_n^{-1}b_n = [b_{n-1}^{-1},a_{n-1}]^{-1}[a_{n-1},b_{n-1}] = [a_{n-1},b_{n-1}^{-1}][a_{n-1},b_{n-1}].$ The claim follows since $b_{n-1}a_{n-1}$ involves no cancellation. Similarly, $a_nb_n^{-1} = [b_{n-1}^{-1},a_{n-1}][b_{n-1},a_{n-1}]$ (and hence $b_na_n^{-1}$) involves no cancellation since $a_{n-1}^{-1}b_{n-1}$ has no cancellation; $a_n^{-1}b_{n}^{-1} = [a_{n-1},b_{n-1}^{-1}][b_{n-1},a_{n-1}]$ (and hence $b_na_n$) has no cancellation since $b_{n-1}b_{n-1}$ has no cancellation. Now, similarly $a_na_n = [b_{n-1}^{-1},a_{n-1}][b_{n-1}^{-1},a_{n-1}]$ has no cancellation since $a_{n-1}^{-1}b_{n-1}^{-1}$ has no cancellation, and finally $b_nb_n = [a_{n-1},b_{n-1}][a_{n-1},b_{n-1}]$ has no cancellation since $b_{n-1}^{-1}a_{n-1}$ has no cancellation. This proves the claim.
\end{proof}
\begin{lemma} We have $\ell(a_n)=\ell(b_n) \geq 2^n$ for all $n \in \N$.
\end{lemma}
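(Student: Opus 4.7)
The plan is to prove $\ell(a_n) = \ell(b_n) \geq 2^n$ by induction on $n$, with the base case $n=0$ being immediate since $\ell(a_0)=\ell(b_0)=1$. For the inductive step, write $\alpha_n = \ell(a_n)$ and $\beta_n = \ell(b_n)$. By Lemma \ref{nocancel}, the outer junctions $b_n^{-1} \cdot a_n$ and $b_n \cdot a_n^{-1}$ appearing in $a_{n+1} = b_n^{-1} a_n b_n a_n^{-1}$ are cancellation-free, so the only possible reduction is concentrated at the central junction $a_n \cdot b_n$. Let $k$ be the cancellation length there, and decompose $a_n = us$ and $b_n = s^{-1} v$ with $|s| = k$ chosen maximally so that $uv$ is reduced.

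A direct substitution then gives
\[a_{n+1} = (v^{-1}s)(us)(s^{-1}v)(s^{-1}u^{-1}) = v^{-1} s u v s^{-1} u^{-1}\]
after cancelling the central $ss^{-1}$, and I then verify that this six-block word is fully reduced. The junctions internal to $v^{-1}s$ and $s^{-1}u^{-1}$ are inherited from the reduced forms of $b_n^{-1}$ and $a_n^{-1}$, and the $u|v$ junction is reduced by the maximality of $k$. The two remaining junctions are controlled by Lemma \ref{nocancel}: the $s|u$ junction reduces to the non-cancellation of $a_na_n$ (the last letter of $a_n$ is not the inverse of its first), while the $v|s^{-1}$ junction reduces to the non-cancellation of $a_nb_n^{-1}$ (the last letters of $a_n$ and $b_n$ differ). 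Hence $\ell(a_{n+1}) = 2\alpha_n + 2\beta_n - 2k$; the analogous analysis of $b_{n+1} = a_n b_n a_n^{-1} b_n^{-1}$ yields the same length with the same $k$, so the equality $\alpha_n = \beta_n$ propagates. Since $k \leq \min(\alpha_n, \beta_n)$, we conclude $\ell(a_{n+1}) \geq 2\max(\alpha_n,\beta_n) \geq 2^{n+1}$.

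The only delicate point is the degenerate case where $u$ or $v$ is empty, i.e.\ $k \in \{\alpha_n,\beta_n\}$, in which some of the junction checks above become vacuous. Together with the inductive equality $\alpha_n = \beta_n$ this would force $b_n = a_n^{-1}$, collapsing $a_{n+1} = [a_n, a_n]$ to the identity; this scenario is excluded by strengthening the induction hypothesis with the side claim $b_n \neq a_n^{-1}$ (equivalently, $a_n$ and $b_n$ do not commute in $\mathbb F_2$) and verifying its preservation through the recursion, which amounts to a short separate check. This will be the main obstacle in a fully rigorous write-up, but it is really a boundary technicality rather than a substantive difficulty — the entire content of the length estimate is carried by the non-cancellation lemma together with the straightforward book-keeping of the decomposition $a_n = us$, $b_n = s^{-1}v$.
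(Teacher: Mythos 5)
Your proof is correct and takes essentially the same route as the paper: both isolate, via Lemma~\ref{nocancel}, the central junction $a_n\cdot b_n$ as the only possible source of cancellation in $a_{n+1}$ and $b_{n+1}$, deduce $\ell(a_{n+1})=\ell(b_{n+1})$, and extract the factor of two --- your decomposition $a_n=us$, $b_n=s^{-1}v$ simply makes explicit the paper's one-line identity $\ell(b_{n+1})=\ell(a_n b_n)+\ell(a_n)+\ell(b_n)$. The degenerate case you flag ($b_n=a_n^{-1}$) is likewise elided in the paper and is a real if minor point there too; it is excluded by the standing fact that $b_{n+1}\neq e$.
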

\begin{proof} It follows from Lemma \ref{nocancel} that
\begin{eqnarray*}
\ell(b_n) &=& \ell(a_{n-1} b_{n-1} a_{n-1}^{-1} b_{n-1}^{-1}) \\
&=& \ell(a_{n-1} b_{n-1}) + \ell(b_{n-1}) + \ell(a_{n-1}) \\
&=& \ell(b_{n-1}^{-1}a_{n-1} b_{n-1} a_{n-1}^{-1})\\
&=& \ell(a_n).
\end{eqnarray*}
Now, it is obvious from this computation that $\ell(b_n) \geq 2 \ell(b_{n-1})$ for all $n \in \N$, and hence $\ell(b_n) \geq 2^n$ for all $n \in \N$.
This proves the claim. 
\end{proof}

\begin{lemma} \label{upper} For all $n\in \N$, we have
$\ell(b_n) \leq 3 \cdot \ell(b_{n-1}) + 2 \cdot \ell(b_{n-2}).$ In particular, there exists a constant $C'>0$, such that
$\ell(b_n) \leq C' \cdot \left(\frac{3+\sqrt{17}}{2} \right)^n$ for all $n \in \N$.
\end{lemma}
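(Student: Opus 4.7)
The plan is to sharpen the computation from the previous lemma, which already yields the identity
\[
\ell(b_n) \;=\; \ell(a_{n-1}b_{n-1}) + \ell(a_{n-1}) + \ell(b_{n-1}) \;=\; \ell(a_{n-1}b_{n-1}) + 2\,\ell(b_{n-1}).
\]
Thus the task reduces to computing $\ell(a_{n-1}b_{n-1})$, which is the only product appearing in the expansion of $b_n = a_{n-1}b_{n-1}a_{n-1}^{-1}b_{n-1}^{-1}$ whose juxtaposition is not covered by Lemma \ref{nocancel}; the cancellation hidden inside it is exactly what keeps the naive estimate $\ell(b_n) \leq 4\,\ell(b_{n-1})$ from being tight.

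I would expand $a_{n-1}b_{n-1}$ one level deeper by substituting the defining expressions in terms of $a_{n-2}$ and $b_{n-2}$. The central subword $a_{n-2}^{-1}a_{n-2}$ collapses entirely, and Lemma \ref{nocancel} guarantees that every remaining adjacency among the surviving letters ($b_{n-2}^{-1}a_{n-2}$, $b_{n-2}b_{n-2}$, $b_{n-2}a_{n-2}^{-1}$, $a_{n-2}^{-1}b_{n-2}^{-1}$) is cancellation-free, with the sole exception of one internal $a_{n-2}b_{n-2}$ seam. This gives
\[
\ell(a_{n-1}b_{n-1}) \;=\; 4\,\ell(b_{n-2}) + \ell(a_{n-2}b_{n-2}).
\]
Re-applying the previous lemma one level down in the rearranged form $\ell(a_{n-2}b_{n-2}) = \ell(b_{n-1}) - 2\,\ell(b_{n-2})$ and plugging back into the identity for $\ell(b_n)$ produces exactly
\[
\ell(b_n) \;=\; 3\,\ell(b_{n-1}) + 2\,\ell(b_{n-2}),
\]
in fact as an equality, so the inequality asserted in the statement is sharp.

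For the exponential estimate I would appeal to standard linear-recurrence theory: the characteristic polynomial $x^2 - 3x - 2$ has roots $\lambda_{\pm} = (3\pm\sqrt{17})/2$, and since $|\lambda_-| < 1$ the closed form $\ell(b_n) = A\lambda_+^n + B\lambda_-^n$ immediately yields a constant $C' > 0$ with $\ell(b_n) \leq C'\lambda_+^n$ for all $n$. The main obstacle is the cancellation bookkeeping for $a_{n-1}b_{n-1}$: one must verify that after the $a_{n-2}^{-1}a_{n-2}$ collapse there is no further reduction beyond the single internal $a_{n-2}b_{n-2}$ subword, which is precisely the one juxtaposition Lemma \ref{nocancel} does not address. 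It is this self-referential feedback---the length of $a_{n-1}b_{n-1}$ being itself controlled by $\ell(a_{n-2}b_{n-2})$---that closes the system and produces the tight two-term linear recurrence governing the growth rate $(3+\sqrt{17})/2 < 4$.
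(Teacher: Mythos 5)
Your proof is correct and follows essentially the same strategy as the paper: expand $b_n$ two levels down, control the cancellations with Lemma \ref{nocancel}, and close the resulting expression with a self-referential length identity from the previous lemma. The only superficial difference is which subword you isolate to close the loop---you use $\ell(a_{n-2}b_{n-2}) = \ell(b_{n-1}) - 2\ell(b_{n-2})$, while the paper uses $\ell(b_{n-2}^{-1}a_{n-2}b_{n-2}) = \ell(a_{n-1}) - \ell(a_{n-2})$; both yield the same recursion, which (as you observe) actually holds with equality.
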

\begin{proof}
We estimate the length of $b_n$ in a straightforward way:
\begin{eqnarray*} 
\ell(b_n) &=& \ell([a_{n-1},b_{n-1}]) \\
&=& \ell([[b_{n-2}^{-1},a_{n-2}],[a_{n-2},b_{n-2}]]) \\
&\leq& \ell((b_{n-2}^{-1}a_{n-2}b_{n-2}a_{n-2}^{-1} a_{n-2}b_{n-2}a_{n-2}^{-1}b_{n-2}^{-1}))\\ 
&&+\ \ell([a_{n-2},b_{n-2}^{-1}]) + \ell([b_{n-2},a_{n-2}]) \\
&\leq& \ell(b_{n-2}^{-1}a_{n-2}b_{n-2}) + \ell(b_{n-2}) + \ell(a_{n-2}^{-1}) + \ell(b_{n-2}^{-1}) \\
&&+\ \ell([a_{n-2},b_{n-2}^{-1}]) + \ell([b_{n-2},a_{n-2}]) \\
&=&  3 \cdot \ell(b_{n-1}) + 2 \cdot \ell(b_{n-2}),
\end{eqnarray*}
where we used the equation $\ell(b_{n-2}^{-1}a_{n-2}b_{n-2}) = \ell(a_{n-1}) - \ell(a_{n-2})$ (a consequence of Lemma \ref{nocancel}) in the last equality. The estimate follows from the fact that $\frac{3 + \sqrt{17}}{2}$ is the largest root of the polynomial $p(\lambda)=\lambda^2 - 3 \lambda - 2$.
 This proves the claim.
\end{proof}

Our first result concerns the growth of the girth in the derived series.
\begin{proposition}
Let $\mu := \frac{3 + \sqrt{17}}{2} = 3,56155...\ .$ We have
$\beta(n)  \leq C' \cdot \mu^n$ for some constant $C'>0$ and infinitely many $n \in \N$. In particular, we get $\beta \leq \log_2(\mu)=1.8325...\ $.
\end{proposition}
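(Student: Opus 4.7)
The plan is to use the constructed sequence $(b_n)_{n \in \N}$ directly: I will show that $b_n$ is a non-trivial element of the $n$-th derived subgroup ${\mathbb F}_2^{(n)}$, and then invoke Lemma \ref{upper} to bound its word length.

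First I would verify, by induction on $n$, that $a_n, b_n \in {\mathbb F}_2^{(n)}$. The base case $n=0$ is immediate since ${\mathbb F}_2^{(0)} = {\mathbb F}_2$. For the inductive step, if $a_n, b_n \in {\mathbb F}_2^{(n)}$, then from the recursive definition
\[
a_{n+1} = [b_n^{-1},a_n] \in [{\mathbb F}_2^{(n)},{\mathbb F}_2^{(n)}] = {\mathbb F}_2^{(n+1)}, \quad b_{n+1} = [a_n,b_n] \in {\mathbb F}_2^{(n+1)},
\]
as required. Next, the previous lemma gives $\ell(b_n) \geq 2^n \geq 1$ for all $n \in \N$, so in particular $b_n \neq e$. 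Therefore $b_n$ is a non-trivial element of ${\mathbb F}_2^{(n)}$, which yields $\beta(n) \leq \ell(b_n)$.

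Now I would combine this with Lemma \ref{upper}: since $\ell(b_n) \leq C' \mu^n$ with $\mu = (3+\sqrt{17})/2$, we get $\beta(n) \leq C' \mu^n$ for every $n \in \N$ (and in particular for infinitely many $n$). Taking logarithms, dividing by $n$, and letting $n \to \infty$ gives
\[
\beta = \lim_{n \to \infty} \frac{\log_2(\beta(n))}{n} \leq \log_2(\mu) = \log_2(3+\sqrt{17}) - 1,
\]
which is the claimed estimate.

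There is no real obstacle here: the construction was tailored so that $a_n, b_n$ land in the $n$-th derived subgroup, and all the nontrivial analytic work (the length upper bound $\ell(b_n) \leq C' \mu^n$ coming from the linear recursion $\ell(b_n) \leq 3\ell(b_{n-1}) + 2\ell(b_{n-2})$) has already been carried out in Lemma \ref{upper}. The only points to check carefully are the membership $b_n \in {\mathbb F}_2^{(n)}$ and the nontriviality, both of which are essentially one-line observations.
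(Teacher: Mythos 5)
Your proof is correct and follows essentially the same route as the paper's: observe that $b_n$ is a non-trivial element of $\mathbb{F}_2^{(n)}$ (so $\beta(n) \leq \ell(b_n)$) and then apply Lemma \ref{upper}. Your formulation is in fact slightly cleaner than the paper's, which introduces an auxiliary quantity $\delta(w) := \max\{n \mid w \in \Gamma^{(n+1)}\}$ and contains a small off-by-one slip in claiming $\delta(b_n) \geq n$ (one has $b_n \in \Gamma^{(n)}$, so $\delta(b_n) \geq n-1$); your direct statement $b_n \in \mathbb{F}_2^{(n)}$, $b_n \neq e$, and hence $\beta(n) \leq \ell(b_n) \leq C'\mu^n$ avoids that wrinkle entirely.
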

\begin{proof}
We set $\delta(w) := \max\{ n \in \N \mid w \in \Gamma^{(n+1)}\}$. It is clear from the construction, that $\delta(b_n) \geq n$. Moreover, we clearly have $\beta(\delta(w)) \leq \ell(w)$. Thus, 
$$\beta(\delta(b_n)) = \ell(b_n) \leq C' \mu^{n} \leq C' \mu^{\delta(b_n)}.$$
This finishes the proof.
\end{proof}

Since $\alpha(2^n) \leq \beta(n)$, the previous result suggests
$\alpha(n) \leq C' n^{\log_2(\mu)}.$ We can improve the exponent by a factor $\log_2(1 + \sqrt{2})$.
Let $$\nu:= \frac{\log_2(3+\sqrt{17}) - 1}{\log_2(1 + \sqrt{2})}= 1,44115577304...\ .$$
\begin{proposition}
 We have $\alpha(n) \leq C' \cdot n^{\nu}$ for infinitely many $n \in \N$ and thus $\alpha \leq \nu$.
\end{proposition}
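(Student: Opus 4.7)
The plan is to refine the analysis of the sequence $b_n$: although the previous proposition used only $b_n\in\mathbb F_2^{(n)}\subset\gamma_{2^n}(\mathbb F_2)$, the specific iterative structure of the construction actually forces $b_n$ into a much deeper lower-central subgroup. Concretely, I aim to show
\[ b_n\in\gamma_{K_n}(\mathbb F_2)\qquad\text{where }K_n\ge c\,(1+\sqrt 2)^n \]
for some $c>0$. Combined with Lemma~\ref{upper}, this yields $\alpha(K_n)\le\ell(b_n)\le C'\mu^n$; since by definition of $\nu$ one has $\mu^n=(1+\sqrt 2)^{n\nu}\sim (K_n/c)^{\nu}$, we get $\alpha(K_n)\le C''K_n^{\nu}$ along an infinite family of indices, and hence $\alpha\le\nu$.

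The depth estimate would be proved by a joint induction in the associated graded free Lie algebra $L=\bigoplus_k\gamma_k(\mathbb F_2)/\gamma_{k+1}(\mathbb F_2)$. Setting $\delta_n:=a_nb_n^{-1}$, the induction hypothesis is that $a_n,b_n\in\gamma_{K_n}$ share a common non-zero leading Lie element $Y_n\in L_{K_n}$, and that $\delta_n\in\gamma_{K_n+K_{n-1}}$ has non-zero leading Lie element $D_n\in L_{K_n+K_{n-1}}$, with $K_0=1$, $K_1=2$ and $K_{n+1}=2K_n+K_{n-1}$ (whose dominant root is $1+\sqrt 2$). The inductive step rests on the simplifications
\[ b_{n+1}=[a_n,b_n]=[\delta_n,b_n],\qquad a_{n+1}=[b_n^{-1},a_n]=[b_n^{-1},\delta_n], \]
which follow from the substitution $a_n=\delta_nb_n$, the commutator identities $[xy,z]=x[y,z]x^{-1}[x,z]$ and $[x,yz]=[x,y]\cdot y[x,z]y^{-1}$, and the fact that $[b_n,b_n]=e$. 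These identities place $a_{n+1}$ and $b_{n+1}$ in $\gamma_{K_n+(K_n+K_{n-1})}=\gamma_{K_{n+1}}$ with common leading Lie term $[D_n,Y_n]\in L_{K_{n+1}}$. For the deeper assertion, I would expand $\delta_{n+1}=[b_n^{-1},\delta_n]\cdot[b_n,\delta_n]$ via the Magnus embedding into the completed free associative algebra: the two degree-$K_{n+1}$ associative contributions $\mp[Y_n,D_n]$ cancel, and the next non-vanishing term, which lives in degree $K_{n+1}+K_n=3K_n+K_{n-1}$, turns out to be a non-zero scalar multiple of the Lie element $[Y_n,[Y_n,D_n]]$.

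The main obstacle is the non-degeneracy check: verifying $[Y_n,[Y_n,D_n]]\neq 0$ in the free Lie algebra on two generators, so that the induction hypothesis propagates and one genuinely obtains $K_n\sim c(1+\sqrt 2)^n$. This is expected because $Y_n$ and $D_n$ are constructed inductively as iterated Lie brackets of $a$ and $b$ without any forced relations, and it can be confirmed by a Hall-basis bookkeeping argument or by evaluating in a sufficiently non-abelian matrix representation of a suitable free nilpotent quotient. The secondary computational load is the Magnus-expansion bookkeeping itself: confirming that the cancellation at degree $K_{n+1}$ is complete and that the next contribution lives in exactly degree $K_{n+1}+K_n$ (any further depth would only strengthen the final bound).
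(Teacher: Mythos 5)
Your global strategy coincides with the paper's: show that $b_n\in\gamma_{K_n}(\mathbb F_2)$ with $K_n$ satisfying $K_{n+1}=2K_n+K_{n-1}$, combine with Lemma~\ref{upper}, and translate the two exponential rates into the exponent $\nu$. Your reduction $b_{n+1}=[\delta_n,b_n]$ and $a_{n+1}=[b_n^{-1},\delta_n]$ with $\delta_n=a_nb_n^{-1}$ is correct and is, up to notation, exactly the identity the paper uses, namely $b_n=[[a_{n-1},b_{n-2}],b_{n-1}]$ (in your variables $[a_{n-1},b_{n-2}]=\delta_{n-1}$). However, your argument for the crucial depth estimate $\gamma(\delta_n)\ge K_n+K_{n-1}$ has a genuine gap: you route it through the Magnus embedding and a cancellation of leading terms, and you explicitly leave unverified both that the degree-$K_{n+1}$ contributions cancel completely and that the next term is a nonzero multiple of $[Y_n,[Y_n,D_n]]$. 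The nonvanishing of $[Y_n,[Y_n,D_n]]$ in the free Lie algebra is not a formality; without it your induction hypothesis (\emph{common nonzero leading Lie element}) breaks, and the "Hall-basis bookkeeping or evaluation in a matrix quotient" is an unproved claim, not an argument.

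You do not actually need any Lie-algebra computation, and the paper avoids it entirely. Since $b_n=b_{n-1}a_nb_{n-1}^{-1}$ (a one-line check from the definitions), one has the \emph{group} identity
\[
\delta_n = a_nb_n^{-1} = a_n\,b_{n-1}a_n^{-1}b_{n-1}^{-1} = [a_n,b_{n-1}],
\]
which immediately gives $\gamma(\delta_n)\ge\gamma(a_n)+\gamma(b_{n-1})=\gamma(b_n)+\gamma(b_{n-1})$ using only the standard inequality $\gamma([w_1,w_2])\ge\gamma(w_1)+\gamma(w_2)$ and the fact that conjugates have the same depth. Combined with $b_{n+1}=[\delta_n,b_n]$, this yields $\gamma(b_{n+1})\ge 2\gamma(b_n)+\gamma(b_{n-1})$ with no nondegeneracy hypothesis and no Magnus bookkeeping. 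This is precisely what the paper proves (its Equation~\eqref{eqrel} encodes the identity $\delta_{n-1}=[a_{n-1},b_{n-2}]$), and it is both simpler and complete; you should replace your Lie-theoretic step by this observation.
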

\begin{proof}
Note that we have the identities
\begin{equation} \label{eqrel}
[[a^{-1},b],[a,b]] = [[[a^{-1},b],a],[a,b]] \quad \mbox{and} \quad [[a^{-1},b],[b,a]] = [[[a^{-1},b],a],[b,a]].
\end{equation}
Indeed, we just check
$
[[a^{-1},b],a] = a^{-1}bab^{-1}a ba^{-1}b^{-1}aa^{-1}= [a^{-1},b] [a,b]$ and use that $[a,b]$ commutes with both $[a,b]$ and $[b,a]=[a,b]^{-1}$. This proves Equation \eqref{eqrel}.
We set 
\begin{equation}
\gamma(w):=\max\{n \mid w \in \gamma_n({\mathbb F}_2)\}. \quad \forall w \in \mathbb F_2.
\end{equation} Clearly, $\gamma(w_1w_2) \geq \min\{\gamma(w_1),\gamma(w_2)\}$ and $\gamma([w_1,w_2]) \geq \gamma(w_1) + \gamma(w_2)$. In order to proceed we need the following lemma.
\begin{lemma} \label{lower}
We have $\gamma(b_n) \geq 2 \gamma(b_{n-1}) + \gamma(b_{n-2})$ for all $n \in \N$. In particular, there exists a constant $C>0$, such that
$\gamma(b_n) \geq C \cdot (1+\sqrt{2})^n.$
\end{lemma}
\begin{proof}
We compute:
\begin{eqnarray*}
b_n &=& [a_{n-1},b_{n-1}] \\&=& [[b_{n-2}^{-1},a_{n-2}],[a_{n-2},b_{n-2}]] \\
&\stackrel{\eqref{eqrel}}{=}& [[[b_{n-2}^{-1}, a_{n-2}], b_{n-2}], [a_{n-2}, b_{n-2}]] \\
&=& [[a_{n-1}, b_{n-2}], b_{n-1}].
\end{eqnarray*}
This proves the claim since $\gamma(a_{n-1})= \gamma(b_{n-1})$ as $b_{n-2} a_{n-1} b_{n-2}^{-1} = b_{n-1}$.
The estimate on $\gamma(b_n)$ follows as before by a study of the growth of the recursively defined sequence $\gamma_n:= 2 \gamma_{n-1} + \gamma_{n-2}$.
\end{proof}

We are now ready to prove the upper bounds on $\alpha(n)$. Note that $\alpha(\gamma(b_n)) \leq \ell(b_n)$ for all $n \in \N$. Thus, as a consequence of Lemma \ref{lower} and Lemma \ref{upper}, we get
$$n \leq \frac{\log_2(\gamma(b_n)) - \log_2(C)}{\log_2(1+\sqrt{2})}$$ and hence
\begin{eqnarray*}\alpha(\gamma(b_n)) &\leq& \ell(b_n) \\ &\leq& C' \cdot \mu^{n} \\
&\leq& C' \exp\left(\frac{\log(\mu) \cdot (\log_2(\gamma(b_n)) - \log_2(C))}{\log_2(1+\sqrt{2})} \right) \\
&=& C' \exp\left(\frac{-\log(\mu)\log_2(C)}{\log_2(1 + \sqrt{2})} \right) \cdot (\gamma(b_n))^{\nu}.
\end{eqnarray*}
This proves the claim.
\end{proof}

\begin{question}
Can we prove better bounds of the form $\gamma(w) \leq \ell(w)^{\delta}$ for some $\delta<1$?
\end{question}

\section{Lower bounds for the derived series}

Again, we consider $\mathbb F_2$ -- the free group with generators $a,b$. For a subgroup $\Lambda \subset \mathbb F_2$, we define $${\rm girth}(\Lambda) := \min \{ \ell(w) \mid w \in \Lambda \setminus \{e\} \}.$$ 

\begin{theorem} \label{lowerbound}
Let $\Lambda \subset \mathbb F_2$ be a normal subgroup. Then, ${\rm girth}([\Lambda,\Lambda]) \geq 3 \cdot {\rm girth}(\Lambda)$
holds.
\end{theorem}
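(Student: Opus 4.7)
The plan is to work in the Cayley graph $X := \operatorname{Cay}(\mathbb F_2/\Lambda,\{a,b\})$, whose girth equals $g := {\rm girth}(\Lambda)$ and whose fundamental group is canonically identified with $\Lambda$. Under this identification, elements of $[\Lambda,\Lambda]$ are precisely the closed walks at the identity that are null-homologous, i.e.\ traverse every edge equally often in both directions. I pick a non-trivial $w\in[\Lambda,\Lambda]$ of minimum length; since $[\Lambda,\Lambda]$ is normal in $\mathbb F_2$, $w$ may be taken cyclically reduced, so its associated walk in $X$ is non-backtracking. Assume for contradiction that $\ell(w)<3g$.

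The core device is a shortest-detour argument. Among all pairs of opposite traversals of a single edge by $w$, choose one $(t_1<t_2)$ minimizing $t_2-t_1$, and let $e=(u,v)$ be that edge, oriented so that step $t_1$ goes $u\to v$ and step $t_2$ goes $v\to u$. Setting $\gamma:=w[t_1+1,\ldots,t_2-1]$ and $w':=w[t_2+1,\ldots,t_1-1]$ (read cyclically), I obtain the factorization $w=e\,\gamma\,\bar e\,w'$. Minimality of the detour forces $\gamma$ not to traverse $e$; non-backtracking of $w$ forces $\gamma$ non-empty, so $\gamma$ is a non-trivial loop at $v$ of length $|\gamma|\geq g$; and minimality of $w$ inside $[\Lambda,\Lambda]$ forces $\gamma\notin[\Lambda,\Lambda]$, i.e.\ $[\gamma]\neq 0$ in $H_1(X)$. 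Consequently $[w']=-[\gamma]\neq 0$, so $w'$ is also non-trivial and $|w'|\geq g$. This alone only yields $\ell(w)\geq 2g+2$.

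The missing $g-2$ is the main obstacle. Under $\ell(w)<3g$ both $|\gamma|,|w'|\leq 2g-3<2g$, and any non-backtracking non-trivial loop of length $<2g$ in a girth-$g$ graph must be a simple cycle (otherwise it splits at a repeated vertex into two non-trivial loops of total length $\geq 2g$). Writing $k:=|\gamma|$, the simple cycle $\gamma$ cannot contain $u$: if it did, the two arcs of $\gamma$ from $v$ to $u$ would close off with $e$ into two cycles of length $\geq g$ each, forcing $k\geq 2g-2$ and contradicting $k\leq 2g-3$.

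The final step estimates $|w'|$. Since $w'$ has $1$-chain $-[\gamma]$, it traverses each of the $k$ edges of $\gamma$ at least once (contributing $\geq k$ to $|w'|$), and it traverses every off-$\gamma$ edge in cancelling pairs, so every such edge is used either zero or at least twice. Because $u\notin\gamma$ while $w'$ meets $\gamma$ (via its $\gamma$-edge traversals), the initial segment of $w'$ up to its first visit to $\gamma$ is a walk from $u$ to some $c\in\gamma$ inside $X\setminus(E(\gamma)\cup\{e\})$. For any $c\in\gamma$, letting $D$ be one of the two arc-lengths in $\gamma$ from $v$ to $c$ and $d:=d_{X\setminus e}(u,c)$, the two closed walks obtained from $e$, a $\gamma$-arc from $v$ to $c$, and a shortest $u$-to-$c$ path in $X\setminus e$ use $e$ exactly once and are therefore non-trivial, so by the girth hypothesis their lengths $1+D+d$ and $1+(k-D)+d$ are each $\geq g$; summing and dividing by $2$ yields $d\geq g-1-k/2$. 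Therefore $w'$ uses at least $g-1-k/2$ distinct off-$\gamma$ edges, each at least twice, so the off-$\gamma$ contribution to $|w'|$ is $\geq 2g-2-k$. Combining, $|w'|\geq k+(2g-2-k)=2g-2$ and $\ell(w)=2+|\gamma|+|w'|\geq 2+g+(2g-2)=3g$, the desired contradiction.
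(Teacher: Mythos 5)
Your proof is genuinely different from the paper's. The paper works algebraically inside $\mathbb F_2$: it fixes a Nielsen reduced basis $S$ of $\Lambda$, expresses a minimal-length $w\in[\Lambda,\Lambda]$ (after cyclic rotation) in the form $sw_1tw_2s^{-1}w_3t^{-1}$ or $sw_1s^{-1}w_2tw_3t^{-1}$ with $s,t\in S^{\pm}$, and then uses the Nielsen cancellation conditions to show that the three $\Lambda$-pieces between the four ``bracketing'' occurrences of $s,t$ each contribute at least ${\rm girth}(\Lambda)$ after cancellation is accounted for. You instead pass to the Cayley graph $X$ of $\mathbb F_2/\Lambda$, identify $[\Lambda,\Lambda]$ with null-homologous reduced closed walks, and run a shortest-detour argument combined with a homological edge count; the Nielsen machinery is replaced by the girth hypothesis on $X$. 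The two proofs buy different things: the paper's argument is short once Nielsen reduction is invoked, while yours is more self-contained, geometric, and makes transparent exactly where the factor $3$ comes from (three essentially disjoint contributions of size $\geq g$: the simple cycle $\gamma$, the $\gamma$-edge traversals of $w'$, and the off-$\gamma$ excursion of $w'$).

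One step needs to be tightened. You assert that ``any non-backtracking non-trivial loop of length $<2g$ in a girth-$g$ graph must be a simple cycle (otherwise it splits at a repeated vertex into two non-trivial loops of total length $\geq 2g$).'' As stated this is false: a reduced loop with a tail (first and last edges inverse to one another) revisits the second vertex, and the split there produces a trivial backtracking loop of length $2$, so the claimed dichotomy fails. For your $\gamma$ this does no harm, because minimality of the detour $(t_1,t_2)$ gives more than ``\,$\gamma$ avoids $e$\,'': if $\gamma$ had a tail, its first and last steps would be opposite traversals of some edge $\{v,z\}$ at times $t_1+1$ and $t_2-1$, a strictly shorter detour, a contradiction. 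Thus $\gamma$ is cyclically reduced, and for a \emph{tailless} non-backtracking non-trivial loop of length $<2g$ your splitting argument is correct and yields that $\gamma$ is a simple cycle. You should make this taillessness explicit before invoking the simple-cycle dichotomy; with that adjustment the argument goes through, including the final edge-count $|w'|\geq k+(2g-2-k)$ and the contradiction $\ell(w)\geq 3g$.
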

\begin{proof}
Recall, a subset $S \subset \mathbb F_2$ is called Nielsen reduced if \begin{enumerate}
\item[(i)] $u \neq e$, for all $u \in S$,
\item[(ii)] $\ell(uv) \geq \max\{\ell(u), \ell(v)\}$, for all $u,v \in S^{\pm1}$ with $uv \neq e$,
\item[(iii)] $\ell(uvw) > \ell(u) - \ell(v) + \ell(w)$, for all $u,v,w \in S^{\pm 1}$ with $uv \neq e$ and $vw \neq e$.
\end{enumerate}
It is well-known that $\Lambda$ has a Nielsen reduced basis \cite[Proposition 2.9]{lyndonschupp} -- let us denote it by $S$. We use the notation
$$|w| := \min \{ \ell(vwv^{-1}) \mid v \in \mathbb F_2 \}.$$
We will show that $|w| \geq 3 \cdot {\rm girth}(\Lambda)$ for all non-trivial $w \in [\Lambda,\Lambda]$.
Every element $w = [\Lambda,\Lambda]$ is a product of elements in $S^{\pm 1}$, so that that the exponent sum of each individual $s \in S$ is equal to zero. Hence, we may assume that $w = s w_1t w_2 s^{-1}w_3 t^{-1}$ or $w=sw_1s^{-1}w_2tw_3t^{-1}$ for some $w_1,w_2,w_3 \in \Lambda$ and $s,t \in S^{\pm}$ such that $st\neq e$ and $st^{-1} \neq e$. Since we are assuming that our basis for $\Lambda$ is Nielsen reduced, the cancellations from the left and right inside some element of $S$ cannot overlap and each will never touch more that one half of the word. Without loss of generality, we may assume that the cancellation that occurs in the product $t^{-1}\cdot s$ is the longest among the cancellations between all other letters that appear in $w$. Let us write $t=at_1$ and $s=as_1$ so that $t_1^{-1}s_1$ is reduced. Let us discuss the first case, i.e.\ $w=s w_1t w_2 s^{-1}w_3 t^{-1}$. Now, the cancellation in the product of $sw_1$ and $tw_2s^{-1}$ must be an initial segment $b$ of $a$, and similarily the cancellation  in the product of $tw_2s^{-1}$ and $w_3t^{-1}$ must be an initial segment $c$ of $a$. 
Since $\Lambda$ is a normal subgroup, we get that
$${\rm girth}(\Lambda) \leq \ell(tw_2s^{-1}) - 2 \ell(a), \quad {\rm girth}(\Lambda) \leq \ell(sw_1) - 2 \ell(b), \quad
{\rm girth}(\Lambda) \leq \ell(w_3t^{-1}) - 2 \ell(c).$$
Hence,
$$
3 \cdot {\rm girth}(\Lambda) \leq \ell(sw_1) + \ell(tw_2s^{-1}) + \ell(w_3t^{-1}) - 2 \ell(a) - 2\ell(b) - 2 \ell(c)= |w|.$$ 
In the second case, i.e.\ $w=sw_1s^{-1}w_2tw_3t^{-1}$, we consider the words $sws^{-1}, w_2$ and $tw_3t^{-1}$ and argue in a similar way. Indeed, the word $b$ cancelled in the product of $sws^{-1}$ and $w_2$ must be an initial segment of $a$. Similarly, the word $c$ cancelled in the product of $w_2$ and $tw_3t^{-1}$. Without loss of generality, $c$ is an initial segment of $b$. Now, we get
$${\rm girth}(\Lambda) \leq \ell(tw_2t^{-1}) - 2 \ell(a), \quad {\rm girth}(\Lambda) \leq \ell(sw_1s^{-1}) - 2 \ell(a), \quad
{\rm girth}(\Lambda) \leq \ell(w_2) - 2 \ell(c).$$ Hence, also in this case we get:
$$
3 \cdot {\rm girth}(\Lambda) \leq \ell(sw_1s^{-1}) + \ell(w_2) + \ell(tw_3t^{-1}) - 4 \ell(a)  - 2 \ell(c) \leq |w|.$$ 
This proves the claim.
\end{proof}

\begin{corollary} \label{corol}
We have ${\rm girth}(\mathbb F_2^{(n)}) \geq 3^{n}$. In particular, we get $\beta \geq \log_2(3) = 1.5849...$\ .
\end{corollary}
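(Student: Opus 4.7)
The plan is to deduce the corollary by a direct induction on $n$, using Theorem \ref{lowerbound} as the inductive engine. The base case is trivial: ${\rm girth}(\mathbb F_2^{(0)}) = {\rm girth}(\mathbb F_2) = 1 = 3^0$, since any single generator is a non-trivial element of length one.

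For the inductive step, I would first observe that $\mathbb F_2^{(n)}$ is a characteristic, hence normal, subgroup of $\mathbb F_2$ for every $n$ (this is a standard property of the derived series, following from the fact that every automorphism of a group preserves commutator subgroups). This is precisely the hypothesis needed to apply Theorem \ref{lowerbound} with $\Lambda = \mathbb F_2^{(n)}$. Combined with the recursive definition $\mathbb F_2^{(n+1)} = [\mathbb F_2^{(n)}, \mathbb F_2^{(n)}]$ and the inductive hypothesis ${\rm girth}(\mathbb F_2^{(n)}) \geq 3^n$, this yields
$${\rm girth}(\mathbb F_2^{(n+1)}) = {\rm girth}([\mathbb F_2^{(n)}, \mathbb F_2^{(n)}]) \geq 3 \cdot {\rm girth}(\mathbb F_2^{(n)}) \geq 3^{n+1},$$
closing the induction.

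To conclude the bound on $\beta$, I would invoke the identification $\beta(n) = {\rm girth}(\mathbb F_2^{(n)})$, which is immediate from the definitions in Section \ref{girth}. The inequality $\beta(n) \geq 3^n$ then gives $\log_2(\beta(n))/n \geq \log_2(3)$ for all $n \geq 1$, so passing to the limit and using the definition $\beta := \lim_{n \to \infty} \log_2(\beta(n))/n$ yields $\beta \geq \log_2(3)$.

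There is essentially no obstacle here, as Theorem \ref{lowerbound} performs all the work; the only point requiring care is the observation that every term in the derived series is normal (so that the theorem applies at each stage), but this is a textbook fact and needs only a brief remark.
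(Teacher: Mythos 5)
Your proposal is correct and is precisely the argument the paper intends: the corollary is stated without proof as an immediate inductive consequence of Theorem \ref{lowerbound}, using normality of the derived subgroups and the identification $\beta(n) = {\rm girth}(\mathbb F_2^{(n)})$, exactly as you spell out.
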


\section{Some applications}

\subsection{Nilpotent residually finiteness growth}
Following Khalid Bou-Rabee \cite{MR2851069} we define $F^{\rm nil}_{\mathbb F_2}(n)$ to be the smallest integer so that for every element $w \in \mathbb F_2$ of length less than or equal $n$, there exists a homomorphism to a finite nilpotent group of cardinality at most $F^{\rm nil}_{\mathbb F_2}(n)$ which does not map $w$ to the neutral element. Following \cite{MR2851069}, the growth behaviour determined by $F^{\rm nil}_{\mathbb F_2}$ is called the {\it nilpotent residual finiteness growth} of the free group.
Claim 1 in the proof of Theorem 3 in \cite{MR2851069} stated 
$$\exp(n^{1/2}) \preceq  F^{\rm nil}_{\mathbb F_2}(n).$$
Using the upper bound on $\alpha$ in Theorem \ref{main}, we can improve a little bit on this.

\begin{theorem} We have
$\exp(n^{\delta}) \preceq  F^{\rm nil}_{\mathbb F_2}(n)$ with
with $$\delta = \frac{\log_2(1 + \sqrt{2})}{\log_2(3+\sqrt{17}) - 1}= 0,69391... .$$
\end{theorem}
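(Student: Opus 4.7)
The plan is to test $F^{\rm nil}_{\mathbb F_2}$ against the explicit words $b_n$ constructed in Section \ref{construction}, exploiting the gap between the upper bound $\ell(b_n) \leq C'\mu^n$ (Lemma \ref{upper}) and the lower bound $\gamma(b_n) \geq C(1+\sqrt{2})^n$ (Lemma \ref{lower}) on their lower-central-series depth. The whole argument is a variation of Bou-Rabee's Claim~1 in \cite{MR2851069}, with the improvement coming entirely from substituting our sharper construction for the naive iterated-commutator sequence.

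First I would record the elementary group-theoretic fact that any finite nilpotent group $G$ of nilpotency class exactly $c$ satisfies $|G| \geq 2^c$. Indeed, if some quotient $\gamma_i(G)/\gamma_{i+1}(G)$ were trivial, then $\gamma_j(G) = \gamma_i(G)$ for all $j \geq i$, contradicting $\gamma_{c+1}(G) = \{e\} \neq \gamma_c(G)$; hence each of the $c$ successive quotients contributes at least a factor of $2$ to $|G|$.

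Next, for any homomorphism $\phi : \mathbb F_2 \to G$ to a finite nilpotent group with $\phi(b_n) \neq e$, I would replace $G$ by its (two-generated) subgroup $\phi(\mathbb F_2)$, which can only decrease $|G|$. Since $\phi(\gamma_m(\mathbb F_2)) \subset \gamma_m(\phi(\mathbb F_2))$, Lemma \ref{lower} forces $\phi(b_n)$ into $\gamma_m(G) \setminus \{e\}$ for some $m \geq C(1+\sqrt{2})^n$; combined with the preceding observation this yields $|G| \geq 2^{C(1+\sqrt{2})^n}$. Using Lemma \ref{upper} and the arithmetic identity $\mu^\delta = 1+\sqrt{2}$ (which is just the definition of $\delta$ rearranged, since $\log_2\mu = \log_2(3+\sqrt{17})-1$), I would then convert $n$ into $\ell(b_n)$ and obtain
$$F^{\rm nil}_{\mathbb F_2}(\ell(b_n)) \ \geq\ 2^{C(1+\sqrt{2})^n} \ \geq\ 2^{C_1 \cdot \ell(b_n)^{\delta}}$$
for a constant $C_1 > 0$ and every $n \in \N$.

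Finally, to upgrade this bound on the subsequence $\{\ell(b_n)\}_{n}$ to the asymptotic statement $\exp(n^\delta) \preceq F^{\rm nil}_{\mathbb F_2}(n)$, I would observe that the recursion $\ell(b_n) \leq 3\ell(b_{n-1}) + 2\ell(b_{n-2}) \leq 5\ell(b_{n-1})$ bounds the gap between consecutive terms, so monotonicity of $F^{\rm nil}_{\mathbb F_2}$ fills in the intermediate lengths at the cost of adjusting the constant inside the $\preceq$. There is no real obstacle in this proof beyond bookkeeping constants and verifying the exponent identity $\log_\mu(1+\sqrt{2}) = \delta$; all the work has already been done in Section~\ref{construction}.
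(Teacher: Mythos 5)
Your proof is correct and is exactly what the paper intends: the paper simply cites Bou-Rabee's Claim~1 and relies on the reader substituting the improved bounds from Section~\ref{construction}, whereas you carry out that substitution explicitly (nilpotent class $c$ forces order $\geq 2^c$, the words $b_n$ have depth $\gamma(b_n)\geq C(1+\sqrt2)^n$ in the lower central series and length $\ell(b_n)\leq C'\mu^n$, and $\log_\mu(1+\sqrt2)=\delta$). The bookkeeping via $\ell(b_n)\leq 5\ell(b_{n-1})$ and monotonicity of $F^{\rm nil}_{\mathbb F_2}$ to pass from the subsequence $\{\ell(b_n)\}$ to all $n$ is also sound.
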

\begin{proof}
The proof is identical to the proof of Claim 1 on page 705 of \cite{MR2851069}. \end{proof}
\subsection{Almost laws for compact groups}

For every group $G$, an element $w \in \mathbb F_2$ gives rise to a natural word map $w \colon G \times G \to G$, which is just given by evaluation.
In \cite{MR3043070}, the second author proved that there exists a sequence of non-trivial elements $(w_n)_n$ in the free group on two generates, such that for every compact group $G$ and every neighborhood $V \subset G$ of the neutral element, there exists $m \in \N$ such that $w_n(G\times G) \subset V$ for all $n \geq m$. This statement is already non-trivial for a fixed compact group such as ${\rm SU}(2)$. Following \cite[Section 5.4]{breu}, we call such a sequence an {\it almost law} for the class of compact groups.

For a specific group like ${\rm SU}(k)$ with a natural metric, say $d(u,v)  := \|u-v\|$ where $\|.\|$ denotes the operator norm, it is natural to ask how long a word $w \in \mathbb F_2$ necessarily has to be, if we demand that $d(1_k,w(u,v)) < \varepsilon$ for all $u,v \in {\rm SU}(k)$. We set 
$$L_k(w) := \max \{ d(1_k,w(u,v)) \mid u,v \in {\rm SU}(k) \}.$$ In \cite[Remark 3.6]{MR3043070} it was claimed that there is a construction of an almost law $(w_n)_n$ as above such that for every $\varepsilon>0$, there exists a constant $C>0$ (which depends also on $k$) such that
$$L_k(w_n)  \leq \exp( -C \cdot \ell(w_n)^{\log_{14}4- \varepsilon} )$$
with $\log_{14}4= 0,5252... $. This construction relies on the basic idea that was already mentioned in connection with Equation \eqref{basic}. The more refined study in this paper yields:

\begin{theorem} \label{almost} Let $k \in \N$. There exists an almost law $(w_n)_n$ for ${\rm SU}(k)$ such that the following holds.
There exists a constant $C>0$ such that
$$L_k(w_n)  \leq \exp\left( -C \cdot \ell(w_n)^{\delta} \right)$$
with $$\delta = \frac{\log_2(1 + \sqrt{2})}{\log_2(3+\sqrt{17}) - 1}= 0,69391... .$$
\end{theorem}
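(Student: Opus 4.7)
The plan is to refine the construction of \cite[Remark~3.6]{MR3043070} by replacing the naive iterated commutator scheme based on Equation~\eqref{basic} with the recursive family $(a_n,b_n)_{n\in\mathbb N}$ defined in Section~\ref{construction}. As a starting input I would invoke the older almost-law construction from \cite{MR3043070} to produce non-trivial words $A,B\in\mathbb F_2$ that generate a free subgroup of rank two, together with a constant $\varepsilon_0>0$ depending on $k$, chosen sufficiently small that
$$\max_{u,v\in\mathrm{SU}(k)}\|A(u,v)-1_k\|\leq\varepsilon_0, \qquad \max_{u,v\in\mathrm{SU}(k)}\|B(u,v)-1_k\|\leq\varepsilon_0,$$
and in particular $\varepsilon_0<1/K_0$, where $K_0$ is the Lipschitz constant in the quadratic commutator estimate below. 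The candidate almost law is $w_n(a,b):=b_n(A(a,b),B(a,b))$, which is non-trivial by freeness of $\langle A,B\rangle$. By Lemma~\ref{upper}, $\ell(w_n)\leq\max(\ell(A),\ell(B))\cdot\ell(b_n)\leq M\mu^n$ for $\mu=(3+\sqrt{17})/2$.

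The norm estimate rests on the Baker--Campbell--Hausdorff bound
$\|[x,y]-1_k\|\leq K_0\|x-1_k\|\cdot\|y-1_k\|$
valid on a fixed operator-norm neighborhood $U$ of $1_k$ in $\mathrm{SU}(k)$. Fix $(u,v)\in\mathrm{SU}(k)^2$ and set $\alpha_n:=\|a_n(A(u,v),B(u,v))-1_k\|$ and $\beta_n:=\|b_n(A(u,v),B(u,v))-1_k\|$. The identity $b_{n-2}a_{n-1}b_{n-2}^{-1}=b_{n-1}$ noted in the proof of Lemma~\ref{lower} expresses $a_{n-1}$ and $b_{n-1}$ as conjugate elements of $\mathbb F_2$; by unitary invariance of the operator norm, $\alpha_n=\beta_n$. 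Applying the quadratic estimate twice to the refined expression $b_n=[[a_{n-1},b_{n-2}],b_{n-1}]$ from Lemma~\ref{lower} then yields
$$\beta_n\leq K_0^2\,\alpha_{n-1}\beta_{n-2}\beta_{n-1}=K_0^2\,\beta_{n-1}^2\beta_{n-2}.$$
Setting $s_n:=\log(1/\beta_n)-\log K_0$, this rearranges to $s_n\geq 2s_{n-1}+s_{n-2}$ with $s_0,s_1>0$, which forces $s_n\geq c(1+\sqrt{2})^n$ since $1+\sqrt{2}$ is the dominant root of $x^2=2x+1$. Hence $L_k(w_n)\leq\exp(-c(1+\sqrt{2})^n)$.

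Combining with the length bound, $n\geq\log_\mu(\ell(w_n)/M)$ gives $(1+\sqrt{2})^n\geq(\ell(w_n)/M)^\delta$ with $\delta=\log_2(1+\sqrt{2})/(\log_2(3+\sqrt{17})-1)$, so $L_k(w_n)\leq\exp(-C\ell(w_n)^\delta)$ for a suitable $C>0$. The main technical hurdle will be the inductive step in the norm recursion: one must verify that all intermediate iterates $a_m(A(u,v),B(u,v)),b_m(A(u,v),B(u,v))$ for $m\leq n$ remain inside the BCH neighborhood $U$ throughout the recursion, so that the quadratic commutator estimate is legitimately applied at every stage. This is routine bookkeeping once $\varepsilon_0<1/K_0$ is enforced, since the recursion then produces super-exponentially decaying $\beta_m$, but it requires assembling the various constants consistently.
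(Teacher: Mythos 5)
Your proposal follows essentially the same route as the paper: pick initial free words with small $L_k$ from \cite{MR3043070}, substitute them into the recursive family $(a_n,b_n)$ from Section~\ref{construction}, use the identity $b_n=[[a_{n-1},b_{n-2}],b_{n-1}]$ from the proof of Lemma~\ref{lower} together with a quadratic commutator contraction to get $s_n\geq 2s_{n-1}+s_{n-2}$, and combine the resulting $(1+\sqrt{2})^n$ decay with the length bound $\ell(w_n)\leq M\mu^n$ from Lemma~\ref{upper}. The one superfluous complication is your worry about keeping all iterates inside a BCH neighborhood: the paper instead invokes the \emph{global} estimate $L_k([w,v])\leq 2\,L_k(w)L_k(v)$ (Lemma 2.1 of \cite{MR3043070}), valid for all unitaries, so choosing the initial words with $L_k\leq 1/3$ already forces a monotone decaying recursion and no neighborhood bookkeeping is needed.
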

\begin{proof}
Our basic method is a well-known contraction property of the commutator map in a Banach algebra. Let $k$ be fixed. In terms of the function $L_k$, Lemma 2.1.\ in \cite{MR3043070} says
\begin{equation} \label{zassenhaus}
L_k([w,v]) \leq 2 \cdot L_k(w)L_k(v).
\end{equation} 
We conclude from Corollary 3.3.\ in \cite{MR3043070} that there exist words $w,v \in \mathbb F_2$ which generate a free subgroup and satisfy $L_k(w),L_k(v) \leq \frac13$. Let us set $w_n:=a_n(w,v)$. It is clear that 
\begin{equation} \label{length}
\ell(w_n) \leq C'' \cdot \left(\frac{3 + \sqrt{17}}{2} \right)^n
\end{equation} for some constant $C''>0$. On the other side, Equation \eqref{zassenhaus} and the equation
$$b_n = [[a_{n-1},b_{n-2}],b_{n-1}]$$
from the proof of Lemma \ref{lower} shows that
$$L_k(w_n) \leq 4 \cdot L_k(w_{n-1})^2 L_k(w_{n-2})$$ or equivalently
$$- \log(2L_k(w_n)) \geq - 2\log(2L_k(w_{n-1})) - \log(2L_k(w_{n-2})).$$
Thus -- precisely as in the proof of Lemma \ref{lower} -- there exists a constant $D>0$ such that
\begin{equation} \label{small}
- \log (2L_k(w_n)) \geq D \cdot (1 + \sqrt{2})^n,
\end{equation}
for some constant $D>0$. Hence,
$$L_k(w_n) \stackrel{\eqref{small}}{\leq} \frac12 \exp\left(- D \cdot (1 + \sqrt{2})^n \right) \stackrel{\eqref{length}}{\leq} \exp\left(- C  \cdot \ell(w_n)^{\delta} \right)$$
for some constant $C$. This implies the claim.\end{proof}

It would be interesting to find a more direct relationship between the growth of the girth of the lower central series and the asymptotics encountered in Theorem \ref{almost}. It is presently unclear if $1 + \varepsilon$ for any $\varepsilon>0$ (or even for $\varepsilon=0$) is enough in Theorem \ref{almost}, see also Section 5.4 in \cite{breu} for a discussion of this question.

\section*{Acknowledgments}
We want to thank Jan-Christoph Schlage-Puchta and Dan Titus Salajan for interesting comments.

\begin{bibdiv}
\begin{biblist}

\bib{breu}{article}{
   author={Menny Aka},
   author={Emmanuel Breuillard},
   author={Lior Rosenzweig},
   author={Nicolas de Saxc\'e},
   title={Diophantine properties of nilpotent Lie groups},
   status={preprint, http://arxiv.org/abs/1307.1489v1}
}

\bib{MR2851069}{article}{
   author={Bou-Rabee, Khalid},
   title={Approximating a group by its solvable quotients},
   journal={New York J. Math.},
   volume={17},
   date={2011},
   pages={699--712},
}

\bib{MR2583614}{article}{
   author={Buskin, Nikolai Vladislavovich},
   title={Efficient separability in free groups},
   language={Russian, with Russian summary},
   journal={Sibirsk. Mat. Zh.},
   volume={50},
   date={2009},
   number={4},
   pages={765--771},
   translation={
      journal={Sib. Math. J.},
      volume={50},
      date={2009},
      number={4},
      pages={603--608},
   },
}

\bib{fox}{article}{
   author={Fox, Ralph H.},
   title={Free differential calculus. I. Derivation in the free group ring},
   journal={Ann. of Math. (2)},
   volume={57},
   date={1953},
   pages={547--560},
}

\bib{MR2784792}{article}{
   author={Kassabov, Martin},
   author={Matucci, Francesco},
   title={Bounding the residual finiteness of free groups},
   journal={Proc. Amer. Math. Soc.},
   volume={139},
   date={2011},
   number={7},
   pages={2281--2286},
}
\bib{levi1}{article}{
   author={Levi, Friedrich},
   title={\"Uber die Untergruppen freier Gruppen I},
   journal={Math. Z.},
   volume={32},
   date={1930},
   number={1},
   pages={315--318},
}

\bib{levi2}{article}{
   author={Levi, Friedrich},
   title={\"Uber die Untergruppen der freien Gruppen II},
   journal={Math. Z.},
   volume={37},
   date={1933},
   number={1},
   pages={90--97},
}

\bib{lyndonschupp}{book}{
   author={Lyndon, Roger C.},
   author={Schupp, Paul E.},
   title={Combinatorial group theory},
   note={Ergebnisse der Mathematik und ihrer Grenzgebiete, Band 89},
   publisher={Springer-Verlag},
   place={Berlin},
   date={1977},
   pages={xiv+339},
 }

\bib{MR2737679}{article}{
   author={Malestein, Justin},
   author={Putman, Andrew},
   title={On the self-intersections of curves deep in the lower central
   series of a surface group},
   journal={Geom. Dedicata},
   volume={149},
   date={2010},
   pages={73--84},
}

\bib{MR2970452}{article}{
   author={Rivin, Igor},
   title={Geodesics with one self-intersection, and other stories},
   journal={Adv. Math.},
   volume={231},
   date={2012},
   number={5},
   pages={2391--2412},
}

\bib{MR3043070}{article}{
   author={Thom, Andreas},
   title={Convergent sequences in discrete groups},
   journal={Canad. Math. Bull.},
   volume={56},
   date={2013},
   number={2},
   pages={424--433},
}

\end{biblist}
\end{bibdiv} 

\end{document}